\renewcommand{\leq}{\leqslant}
\renewcommand{\geq}{\geqslant}
\numberwithin{equation}{section}
\newcommand{\1}{\mathbf{1}}
\newcommand{\eps}{\epsilonup}
\newcommand{\tn}[1]{\textnormal{#1}}
\newcommand{\mrm}[1]{\mathrm{#1}}
\newcommand{\N}{\mathbb{N}}
\newcommand{\Z}{\mathbb{Z}}
\newcommand{\R}{\mathbb{R}}
\newcommand{\diag}{\mathrm{diag}}
\newcommand{\T}{\mathsf{T}}
\newcommand{\sto}[2]{\stackrel[#2]{#1}{\longrightarrow}}
\newcommand{\ston}[1]{\stackrel[n\to\infty]{#1}{\longrightarrow}}
\newcommand{\norm}[1]{\left\|#1\right\|}
\newcommand{\bpm}{\begin{pmatrix}}
\newcommand{\epm}{\end{pmatrix}}
\newtheorem{theorem}{Theorem}
\newtheorem{lemma}[theorem]{Lemma}
\newtheorem{proposition}[theorem]{Proposition}
\newtheorem{remark}[theorem]{Remark}
\begin{document}

\title[Spectral norm of heavy-tailed random matrices]{On the spectral norm of large heavy-tailed random matrices with strongly dependent rows and columns}
\author[O. Pfaffel]{Oliver Pfaffel}
\address{TUM Institute for Advanced Study \& Department of Mathematics, Technische Universit\"at M\"unchen, Germany}
\email{o.pfaffel@gmx.de}

\date{}
\begin{abstract}
We study a new random matrix ensemble $X$ which is constructed by an application of a two dimensional linear filter to a matrix of iid random variables with infinite fourth moments. Our result gives asymptotic lower and upper bounds for the spectral norm of the (centered) sample covariance matrix $XX^\T$ when the number of columns as well es the number of rows of $X$ tend to infinity.
\end{abstract}
\subjclass[2010]{60B20, 62G32, 60G55, 62H25}
\keywords{Random Matrix Theory, heavy-tailed distribution, dependent entries, spectral norm, largest eigenvalue,  sample covariance matrix, linear process}
\maketitle

\section{Introduction and main result}


A {random matrix ensemble} is a sequence of matrices with increasing dimensions and randomly distributed entries. \emph{Random Matrix Theory} (RMT) studies the asymptotic spectrum, e.g., limiting eigenvalues and eigenvectors, of random matrix ensembles. A comprehensive introduction into RMT can be found, for instance, in the textbooks \cite{Anderson2009} and \cite{Bai2010}. 
In Davis et al. \cite{Davis2011} the authors study the asymptotic properties of the extreme singular values of a heavy-tailed random matrix $X$ the rows of which are given by independent copies of some linear process. This was motivated by the statistical analysis of observations of a high-dimensional linear process with independent components. Typically, the linear processes used in multivariate stochastic modeling have the more general form
\begin{align*}
 \mathbf{X}_t = \sum_j A^{(j)} \mathbf{Z}_{t-j}, \quad t=1,\ldots,n,
\end{align*}
where $A^{(j)}$ is a sequence of deterministic $p\times p$ matrices and $\mathbf{Z_t}$ is a noise vector containing $p$ independent and identically distributed (iid) random variables $Z_{1t},\ldots,Z_{pt}$.
Of course, the process $\bf X$ does not have independent components except when $A^{(j)}$ is a multiple of the identity matrix. Let us denote by $\tilde X$ the matrix with columns $\bf X_1,\ldots,\bf X_n$. Then the $it$-th entry of $\tilde X$ is given by
\begin{align*}
 \tilde X_{it} = \sum_j \sum_{k=1}^p A_{ik}^{(j)} Z_{k,t-j}.
\end{align*}
This motivates to study the 
general random matrix ensemble
\begin{align*}
 \tilde X_{it} = \sum_j \sum_{k} d(i,j,k)  Z_{i-k,t-j}
\end{align*}
with some iid array $Z=(Z_{it})$ and some function $d:\N\times\Z^2\to\R,(i,j,k)\mapsto d(i,j,k)$ such that the above double sum converges. The matrix $\tilde X$ can be seen as a two dimensional filter applied to some noise matrix $Z$. The spectral distribution of these matrices has been studied for Gaussian matrices $\tilde X$ and $ d(i,j,k)=\tilde d(j,k)$ by \cite{hachem2005}, and for more general light-tailed distributions by \cite{anderson2008} under the assumption that $\tilde d(j,k)=0$ if $j$ or $k$ is larger than some fixed constant. We investigate the case where the function $d$ can be factorized in the form $d(i,j,k)=c_j \theta_k$. Thus in our model the random matrix $\hat X = (\hat X_{it})\in\R^{p\times n}$ is given by
\begin{align}\label{mymodel}
\hat X_{it} = \sum_j \sum_{k} c_j \theta_k  Z_{i-k,t-j},
\end{align}
 for two real sequences $(c_j)$ and $(\theta_k)$. In contrast to the model $X=(X_{it})$ considered in Davis et al. \cite{Davis2011}, with 
 \[ X_{it}=\sum_j c_j Z_{i,t-j}, \]
the matrix $\hat X$ has not only dependent columns but also dependent rows. Indeed, writing the model \eqref{mymodel} in the form
\begin{align}
 \hat X_{it} &= \sum_j c_j \xi_{i,t-j}, \label{m1}\\
 \xi_{it} &= \sum_{k} \theta_k Z_{i-k,t}, \label{m2}
\end{align}
one can see that, by going from $X$ to $\hat X$, the noise sequence $Z$ in the processes along the rows is replaced by a linear process $\xi$ along the columns. Since we want to investigate a heavy-tailed random matrix model we assume that $(Z_{it})_{i,t}$ 
 is an array of regularly varying iid random variables with tail index $\alpha\in(0,4)$ satisfying
\begin{align}
 nP(|Z_{11}|>a_n x) \to x^{-\alpha}. \label{mZ}
\end{align}
Furthermore, let $(c_j)$ and $(\theta_k)$ be sequences of real numbers such that
\begin{align}
 &\sum_j |c_j|^\delta<\infty, \tn{ and}\label{mC} \\
 &\sum_{k} |\theta_k|^\delta < \infty \quad\tn{for some }\delta<\min\{\alpha,1\}. \label{mTheta}
\end{align}
If $5/3<\alpha<4$ we also require that $Z_{11}$ satisfies the tail balancing condition, i.e., the existence of the limits
\begin{align}
\lim_{x\to\infty}\frac{P(Z_{11}>x)}{P(|Z_{11}|>x)}=q \quad\tn{and}\quad \lim_{x\to\infty}\frac{P(Z_{11}\leq -x)}{P(|Z_{11}|>x)}=1-q
\label{balancing3}
\end{align}
for some $0\leq q\leq 1$. By the above definitions, $\hat X$ is a $p\times n$ random matrix with dependent entries with infinite fourth moments. Under the assumption that $p$ and $n$ go to infinity such that the ratio $p/n$ converges to a positive finite constant, Soshnikov \cite{Soshnikov2004,Soshnikov2006} and Auffinger et al. \cite{Auffinger2009} have studied the eigenvalues of heavy-tailed random matrices with independent and identically distributed entries. Bose et al.\cite{bose2009} investigate the spectral norm of circulant type matrices with heavy-tailed entries.
\noindent
In the following we assume that both $p=p_n$ and $n$ go to infinity such that
\begin{align}
\limsup_{n\to\infty} \frac{p_n}{n^\beta}<\infty
\label{mBeta}
\end{align}
for some $\beta>0$ satisfying
\begin{align*}
&\beta<\infty \quad\tn{if}\quad \alpha\in(0,1], \\
&\beta<\max\left\{\frac{2-\alpha}{\alpha-1},\frac12\right\} \quad\tn{if}\quad \alpha\in(1,2), \\
&\beta<\max\left\{\frac{4-\alpha}{4(\alpha-1)},\frac{1}{3}\right\}\quad\tn{if}\quad 2\leq\alpha< 3, \quad\tn{or} \\
&\beta<\frac{4-\alpha}{3\alpha-4} \quad\tn{if}\quad 3\leq\alpha<4.
\end{align*}
\noindent
Recall that any symmetric matrix $A$ has real eigenvalues. The spectral norm $\norm{A}_2$ of $A$ is given by the maximum of the absolute values of the eigenvalues of $A$. For $\hat X$ given by \eqref{mymodel}, our main theorem investigates the asymptotic behaviour of the spectral norm $\norm{S}_2$ of the \emph{centered sample covariance matrix} $S=\hat X \hat X^\T-n\mu_{X,\alpha} HH^\T$, where
\begin{align}\label{mu hat X alpha}
\mu_{X,\alpha} = \left\{\begin{array}{ll}  0 & \tn{ for } 0<\alpha<2,\\
 E\left(Z_{11}^2\1_{\{Z_{11}^2\leq a_{np}^2\}}\right) \sum_j c_j^2 &  \tn{ for } \alpha=2 \tn{ and } EZ_{11}^2=\infty,\\
 E\left(Z_{11}^2\right) \sum_j c_j^2 & \tn{ else,} \end{array}\right.
\end{align}
and $H=(H_{ij})\in\R^{p\times 3p}$ is given by
\begin{align}\label{H}
H_{ij}=\theta_{p-(j-i)}\1_{\{ 0\leq j-i\leq 2p \}}.
\end{align}
Observe that the diagonal entries of $n\mu_{X,\alpha}HH^\T$ are exactly the means of the diagonal elements of $\hat X\hat X^\T$ if the observations have a finite variance. In case the observations have an infinite variance, we do not have to center, except when $\alpha=2$ and $EZ_{11}^2=\infty$, where we use a truncated version of the mean. In the latter case $\mu_{X,\alpha}$ also depends on $p$ and $n$.

\begin{theorem}\label{theorem dependent rows}
Consider the random matrix model given by equations \eqref{mymodel}, \eqref{mZ}, \eqref{mC} and \eqref{mTheta} with $\alpha\in(0,4)$. If $\alpha\in(5/3,4)$ we assume that $Z_{11}$ has zero mean and satisfies the tail balancing condition \eqref{balancing3}. 
Denote by $S=\hat X \hat X^\T-n\mu_{X,\alpha} HH^\T$ the centered sample covariance matrix, with $\mu_{X,\alpha}$ and $H=(H_{ij})\in\R^{p\times 3p}$ as given in \eqref{mu hat X alpha} and \eqref{H}.
Let $\Gamma_1$ be an exponentially distributed random variable with mean one and $x>0$.
If $p$ and $n$ go to infinity such that condition \eqref{mBeta} is satisfied then we have for the spectral norm $\norm{S}_2$ of $S$ that
 \begin{align}\label{bounds for lmax}
 P\left( \Gamma_1^{-2/\alpha} \max_{k}\theta_k^2  \sum_j c_j^2 >x \right) \leq& \liminf_{n\to\infty} P\left( \norm{S}_2 > a_{np}^2 x \right) \nonumber\\
\leq& \limsup_{n\to\infty} P\left( \norm{S}_2 > a_{np}^2 x \right) \nonumber\\ \leq& P\left( \Gamma_1^{-2/\alpha} \max_{l} |\theta_{l}| \sum_{k} |\theta_{k}| \sum_j c_j^2>x \right)
\end{align}
\end{theorem}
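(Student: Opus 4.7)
\emph{Reduction to a row-iid problem.} The first step is to expose the row structure of $\hat X$. Introducing the auxiliary matrix $X^{\mrm{ext}}=(X^{\mrm{ext}}_{r,t})$ with $r\in\{1-p,\dots,2p\}$ and
\[ X^{\mrm{ext}}_{r,t}=\sum_j c_j Z_{r,t-j},\]
which is exactly the row-iid linear-process model studied in~\cite{Davis2011}, one has $\hat X_{it}=\sum_{k=-p}^p\theta_k X^{\mrm{ext}}_{i-k,t}+\rho_{it}$, where the boundary remainder $\rho$ comes from $|k|>p$ and is asymptotically negligible at scale $a_{np}$ thanks to~\eqref{mTheta}. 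Up to $\rho$, the matrix $H$ of~\eqref{H} is precisely the linear filter realising this: $\hat X=HX^{\mrm{ext}}$. Setting $Y=X^{\mrm{ext}}(X^{\mrm{ext}})^\T-n\mu_{X,\alpha}I_{3p}$, this reduces the question to studying the bilinear form $HYH^\T=S+o_P(a_{np}^2)$ on the row-independent centered sample covariance matrix $Y$, whose spectral behaviour is known.

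\emph{Upper bound.} Let $(i^\ast,t^\ast)$ denote the a.s.-unique location of $\max_{r,t}|Z_{r,t}|$. The heavy-tailed spectral theory of the iid-row model shows that $Y$ concentrates on the rank-one matrix $Y^{(1)}=\bigl(\sum_jc_j^2\bigr)Z_{i^\ast,t^\ast}^2\,e_{i^\ast}e_{i^\ast}^\T$; the sub-leading spectral mass of $Y$ sits on canonical directions $e_{i^{(m)}}$ associated with the next-largest $|Z|$'s, a.s.\ distinct from $i^\ast$, and after conjugation with $H$ the near-orthogonality of these contributions is what keeps $\|H(Y-Y^{(1)})H^\T\|_2=o_P(a_{np}^2)$. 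It then suffices to compute
\[ \|HY^{(1)}H^\T\|_2=\bigl(\sum_jc_j^2\bigr)Z_{i^\ast,t^\ast}^2\,\|He_{i^\ast}\|_2^2,\]
and to bound via H\"older
\[ \|He_{i^\ast}\|_2^2=\sum_{i=1}^p H_{i,i^\ast}^2 \leq (\max_\ell|\theta_\ell|)\sum_{i=1}^p|H_{i,i^\ast}|\leq (\max_\ell|\theta_\ell|)\sum_k|\theta_k|.\]
Combined with $Z_{i^\ast,t^\ast}^2/a_{np}^2\convw\Gamma_1^{-2/\alpha}$, this yields the upper bound in~\eqref{bounds for lmax}.

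\emph{Lower bound.} Here I use $\|S\|_2\geq|S_{ii}|$ for a well-chosen row index. Let $k^\ast$ realise $\theta_{k^\ast}^2=\max_k\theta_k^2$ and set $i=i^\ast+k^\ast$; this choice lies in $\{1,\dots,p\}$ with probability $\to 1$ since $(i^\ast,t^\ast)$ falls in the bulk. Isolating in $\hat X_{it}=\sum_{j,k}c_j\theta_k Z_{i-k,t-j}$ the summand with $(k,j)=(k^\ast,t-t^\ast)$ and calling the rest $R_{it}$,
\begin{align*}
(\hat X\hat X^\T)_{ii}=\theta_{k^\ast}^2 Z_{i^\ast,t^\ast}^2\sum_tc_{t-t^\ast}^2+2\theta_{k^\ast}Z_{i^\ast,t^\ast}\sum_tc_{t-t^\ast}R_{it}+\sum_tR_{it}^2.
\end{align*}
The first term is asymptotic to $\max_k\theta_k^2\,(\sum_jc_j^2)Z_{i^\ast,t^\ast}^2$; the last, after subtracting $n\mu_{X,\alpha}(HH^\T)_{ii}$, is controlled by the iid-row analysis of~\cite{Davis2011} applied to the array obtained by deleting $Z_{i^\ast,t^\ast}$; the cross term is dominated by Cauchy--Schwarz and the previous bound. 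Using $Z_{i^\ast,t^\ast}^2/a_{np}^2\convw\Gamma_1^{-2/\alpha}$ delivers the lower bound.

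\emph{Main obstacle.} The principal technical difficulty is certifying $\|H(Y-Y^{(1)})H^\T\|_2=o_P(a_{np}^2)$: the naive operator bound $\|H\|_2^2\,\|Y-Y^{(1)}\|_2\leq(\sum_k|\theta_k|)^2 a_{np}^2\Gamma_2^{-2/\alpha}$ is itself of order $a_{np}^2$ and therefore insufficient. What saves the argument is that the sub-leading spectral mass of $Y$ is carried by \emph{different} canonical basis vectors $e_{i^{(2)}},e_{i^{(3)}},\dots$ whose $H$-images have almost disjoint supports with probability $\to 1$; combined with careful bounds on the off-diagonal bilinear sums $\sum_{(a,b)\neq(a',b')}\theta_{i-a}\theta_{i'-a'}\gamma_{b-b'}Z_{a,b}Z_{a',b'}$ (with $\gamma_m=\sum_t c_tc_{t+m}$), this approximate orthogonality is precisely what produces the regime-dependent growth conditions on $p_n$ in~\eqref{mBeta}.
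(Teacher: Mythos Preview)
Your reduction step---writing $\hat X=HX^{\mrm{ext}}$ up to a negligible tail and hence $S=HYH^\T+o_P(a_{np}^2)$ with $Y$ the centered sample covariance of the row-iid model---is exactly how the paper proceeds. The lower bound via $\|S\|_2\geq|S_{ii}|$ for $i$ chosen near the location of the maximal $|Z|$ is also essentially the paper's argument, which phrases the same idea through the point-process limit of the diagonal entries of $HDH^\T$.

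The upper bound, however, has a genuine gap. Your central claim $\|H(Y-Y^{(1)})H^\T\|_2=o_P(a_{np}^2)$ is false as stated. After the diagonal reduction $Y\approx\tilde D$ (which you implicitly use), the residual $Y-Y^{(1)}$ still contains the diagonal entries $\tilde D_{i^{(2)}},\tilde D_{i^{(3)}},\dots$ associated with the second, third, \dots\ order statistics of the row sums, and these are of the \emph{same} order $a_{np}^2$ as the leading one (their joint limit is $\Gamma_m^{-2/\alpha}\sum_jc_j^2$). Consequently $H(Y-Y^{(1)})H^\T$ has diagonal entries of size $\asymp a_{np}^2\Gamma_2^{-2/\alpha}\max_k\theta_k^2$, so its spectral norm is not $o_P(a_{np}^2)$. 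The ``approximate orthogonality'' you invoke could at best show that the rank-one pieces $\tilde D_{i^{(m)}}(He_{i^{(m)}})(He_{i^{(m)}})^\T$ do not interact---so that the spectral norm of their sum is the maximum rather than the sum of the individual norms---but this is a statement about $\|HYH^\T\|_2$ directly, not about the smallness of the residual, and you have not supplied the argument. Note also that, were your route correct, it would yield $\|He_{i^\ast}\|_2^2=\sum_k\theta_k^2$ and hence a strictly sharper upper bound than the theorem claims; your subsequent H\"older weakening to $\max_l|\theta_l|\sum_k|\theta_k|$ is then artificial.

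The paper circumvents all of this by inserting one more reduction before bounding: by \cite[Proposition~3.4]{Davis2011} one has $a_{np}^{-2}\|XX^\T-D\|_2\to 0$ with $D=\diag(XX^\T)$, so $S$ is approximated by $H\tilde DH^\T$ with $\tilde D$ \emph{diagonal}. The upper bound is then obtained not via a rank-one approximation but via the crude inequality $\|H\tilde DH^\T\|_2\leq\|H\tilde DH^\T\|_\infty$, whose row sums are moving averages of the $\tilde D_l$'s with coefficients $|\theta_{p-l}|\sum_k|\theta_k|$; a point-process lemma for such moving averages (the paper's Lemma~4) then gives $\max_l|\theta_l|\sum_k|\theta_k|\sum_jc_j^2\,\Gamma_1^{-2/\alpha}$ as the limiting upper bound. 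This avoids any need to control the sub-leading spectrum of $Y$.
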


\begin{remark}
\begin{enumerate}
\item If all $\theta_k$'s except one are zero, one has equality and therefore recovers the result from \cite[Theorem 1]{Davis2011}. If two or more $\theta_k$ are non-zero, then 
\[ P\left( \Gamma_1^{-2/\alpha} \max_{k}\theta_k^2  \sum_j c_j^2 >x \right) < P\left( \Gamma_1^{-2/\alpha} \max_{l} |\theta_{l}| \sum_{k} |\theta_{k}| \sum_j c_j^2>x \right). \]
Whether the $\liminf$ and $\limsup$ are equal in this case and attain one of its boundaries remain open problems.
\item Since $P(\Gamma_1^{-2/\alpha}\leq x)=e^{-x^{-\alpha/2}}$, inequality \eqref{bounds for lmax} can equivalently be written as
 \begin{align*}
 \exp\left(-x^{-\alpha/2}\max_{l} |\theta_{l}|^{\alpha/2}   \left(\sum_{k} |\theta_{k}| \sum_j c_j^2\right)^{\alpha/2} \right) \leq& \liminf_{n\to\infty} P\left( \norm{S}_2 \leq a_{np}^2 x \right) \\
\leq& \limsup_{n\to\infty} P\left( \norm{S}_2 \leq a_{np}^2 x \right) \\
\leq& \exp\left(-x^{-\alpha/2} \max_{k}|\theta_k|^\alpha  \left(\sum_j c_j^2\right)^{\alpha/2} \right).
\end{align*}
\end{enumerate}
\end{remark}

Results from the theory of point processes and regular variation are required through most of this paper.
A detailed account on both topics can be found in a number of texts. We mainly adopt the setting, including notation and terminology, of Resnick \cite{Resnick2008}.

\section{Dependence of successive rows}

To understand the basic principle of our method it is beneficial to first investigate the case where only successive rows of $\hat X$ are dependent and where $\alpha\in(0,2)$.
Since $\mu_{X,\alpha}=0$ for $\alpha<2$, $S=XX^\T$ and therefore the spectral norm of $S$ is equal to the largest eigenvalue of $XX^\T$, i.e., $\norm{S}_2=\lambda_{\max}$. We start with the model
\begin{align}
 \hat X_{it} &= \sum_j c_j \xi_{i,t-j}, \\
\xi_{it} &= Z_{it} + \theta Z_{i-1,t}.
\end{align}
It is easy to see that $\hat X_{it} = X_{it} + \theta X_{i-1,t}$, where $X_{it}=\sum_j c_j Z_{i,t-j}$ for $i=0,1,\ldots,p$, and $t=1,\ldots,n$. To proceed further we define the matrices $\hat X=(\hat X_{it})\in\R^{p\times n}$, $X=(X_{(i-1),t})\in\R^{(p+1)\times n}$ and $H=(H_{ij})\in\R^{p\times (p+1)}$, where all entries of $H$ are zero except $H_{ii}=\theta$ and $H_{i,i+1}=1$. Then we clearly have the matrix equality
\begin{align}\label{matrix equality}
 \hat X = H X.
\end{align}
Moreover, we denote by $D=(D_{i})=\diag(XX^\T)\in\R^{(p+1)\times(p+1)}$ the diagonal of $XX^\T$, that is the diagonal matrix which consists of the diagonal entries of $XX^\T$. For the convenience of the reader, we restate the result from \cite[Proposition 3.4]{Davis2011}.
\begin{proposition}\label{rephrased op norm conv}
Under the conditions of \cref{theorem dependent rows} we have that
\begin{align*}
  a_{np}^{-2}\norm{XX^\T-D}_2 \ston{P} 0.
\end{align*}
\end{proposition}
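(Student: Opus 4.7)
This proposition is a restatement of Proposition 3.4 of Davis, Pfaffel and Stelzer, and the proof is in that paper; I describe below the approach I would take, which follows theirs. The structural point is that rows of $X$ are independent, because row $i$ depends only on $(Z_{i,s})_{s\in\Z}$ via $X_{it}=\sum_j c_j Z_{i,t-j}$. Hence for $i\ne j$ the off-diagonal entry $(XX^\T)_{ij}=\sum_t X_{it}X_{jt}$ is a sum of products of independent heavy-tailed linear processes, which makes a trace moment expansion tractable.

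The first step is to truncate the noise: for small $\epsilon>0$ set $Z_{it}^{\le}=Z_{it}\1_{\{|Z_{it}|\le\epsilon a_{np}\}}$ and $Z_{it}^{>}=Z_{it}-Z_{it}^{\le}$, yielding the corresponding linear processes $X^{\le},X^{>}$ and a decomposition
\begin{align*}
XX^\T-D=\bigl(X^{\le}(X^{\le})^\T-D^{\le}\bigr)+\bigl(X^{>}(X^{>})^\T-D^{>}\bigr)+\text{cross terms},
\end{align*}
where each $D^\bullet$ is the diagonal of the corresponding product. The plan is to bound each piece separately and then let $n\to\infty$ first and $\epsilon\to 0$ afterwards.

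For the heavy piece, \eqref{mZ} implies that the number of indices $(i,s)$ with $|Z_{i,s}|>\epsilon a_{np}$ contributing to the matrix is $O_P(\epsilon^{-\alpha})$, uniformly in $n$. Each such large $Z_{i,s}$ carried through the filter $(c_j)$ produces a rank-one summand of $X^{>}$ supported on row $i$, so $\rank(X^{>})=O_P(1)$; combining $\|A\|_2\le\sqrt{\rank(A)}\|A\|_F$ with a direct estimate of $\sum_t c_{t-s}c_{t-s'}$ for pairs of large $Z$'s in different rows yields $\|X^{>}(X^{>})^\T-D^{>}\|_2=o_P(a_{np}^2)$ as $\epsilon\to 0$. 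The cross terms are handled analogously, pairing the rank bound on $X^{>}$ with a Frobenius bound on $X^{\le}$ built from truncated moments only. For the bounded piece I would apply the trace inequality $\|B\|_2^{2k}\le\tr(B^{2k})$ to $B=X^{\le}(X^{\le})^\T-D^{\le}$ for an even integer $2k=2k(\alpha)$, and expand the right side as a sum over closed walks of length $2k$ on $\{1,\ldots,p\}$ with no self-loops (enforced by the zero diagonal of $B$). Independence of rows of $X$ makes expectations factor over the row labels visited by a walk, and the truncated moments $E|Z^{\le}|^m$ scale as explicit powers of $a_{np}$ for $m>\alpha$; a careful walk enumeration then gives $E\tr(B^{2k})=o(a_{np}^{4k})$ under the growth restriction \eqref{mBeta}.

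The principal obstacle is the case analysis across the regimes $\alpha\in(0,1]$, $(1,2)$, $[2,3)$, $[3,4)$: the optimal choices of truncation level, moment order $2k$, and walk enumeration differ in each, and the precise constraint on $\beta$ in \eqref{mBeta} is exactly what is needed for the trace bound to beat $a_{np}^{4k}$. Centering by $n\mu_{X,\alpha}$ becomes essential once $\alpha\ge 2$, since otherwise the diagonal bias would contribute on the scale $a_{np}^2$; the tail-balancing condition \eqref{balancing3} is required when $\alpha>5/3$ to control contributions from third or fourth truncated moments that would otherwise leak onto the off-diagonal.
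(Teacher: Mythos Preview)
The paper does not prove this proposition at all: it is explicitly introduced as a restatement of \cite[Proposition~3.4]{Davis2011} and is used as a black box thereafter. You correctly identify this, and your sketch of the truncation-plus-trace-moment argument is a reasonable summary of the Davis--Pfaffel--Stelzer proof, so there is nothing further to compare.
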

Thus, since $\norm{H}_2\leq\norm{H}_\infty\leq 1+|\theta|$, we immediately conclude, by \eqref{matrix equality}, that
\begin{align}\label{opnorm2}
 a_{np}^{-2}\norm{\hat X \hat X^\T-HDH^\T}_2\leq \norm{H}_2^2 a_{np}^{-2}\norm{XX^\T-D}_2 \to 0. 
\end{align}
Hence, by Weyl's inequality (\cite[Corollary III.2.6]{Bhatia1997}), the largest eigenvalue $\lambda_{\max}$ of the sample covariance matrix $\hat X \hat X^\T$ based on the observations $\hat X$ is asymptotically equal to the largest eigenvalue of the tridiagonal matrix
\begin{align}
 HDH^\T=\begin{pmatrix} D_{1}+\theta^2 D_{2} & \theta D_{2} & 0 &   \\ 
			\theta D_{2} & D_{2}+\theta^2 D_{3} & \theta D_{3} &   \\
			0 & \ddots & \ddots  &  0  \\ 
			 & & D_{p-1}+\theta^2 D_{p}&  \theta D_{p}  \\
			 & 0 & \theta D_{p} & D_{p}+\theta^2 D_{p+1}
\end{pmatrix}\in\R^{p\times p}.
\end{align}
It is our goal to find an asymptotic upper and lower bound for $\lambda_{\max}$. First we prove a lower bound. Clearly, $\lambda_{\max}$ is asymptotically larger or equal than the largest diagonal entry of $HDH^\T$, i.e.,
\begin{align}\label{lower bound ma1}
 \lambda_{\max}\geq \max_{1\leq i\leq p} (D_{i}+\theta^2 D_{i+1}) + o_P(a_{np}^2),
\end{align}
where $o_P(1)$ denotes some generic random variable that converges to zero in probability as $n$ goes to infinity. Since $D_{i+1}=\sum_{t=1}^n X_{it}^2$, we have to find the maximum of an MA(1) process of partial sums of linear processes. By \cite[Proposition 3.5]{Davis2011} we already know that
\begin{align}\label{known pp conv}
 \sum_{i=0}^p\eps_{a_{np}^{-2}D_{i+1}} = \sum_{i=0}^p\eps_{a_{np}^{-2}\sum_{t=1}^n X_{it}^2} \ston{D} \sum_{i=1}^\infty \eps_{\Gamma_i^{-2/\alpha}\sum_j c_j^2}.
\end{align}
Since $(D_i)$ is an iid sequence, this result can be generalized as follows.

\begin{lemma}\label{ma of partial sum}
Under the conditions of \cref{theorem dependent rows} we have that
\[ I_p=\sum_{i=1}^{p}\eps_{a_{np}^{-2}(D_{i+1},D_i)} \ston{D} I= \sum_{i=1}^\infty \left(\eps_{\Gamma_i^{-2/\alpha}\sum_j c_j^2(1,0)}+ \eps_{\Gamma_i^{-2/\alpha}\sum_j c_j^2(0,1)}\right). \]
\end{lemma}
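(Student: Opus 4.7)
The key structural observation is that $(D_i)_{i\geq 1}$ is an iid sequence, since $D_i=\sum_{t=1}^n X_{(i-1),t}^2=\sum_{t=1}^n \bigl(\sum_j c_j Z_{i-1,t-j}\bigr)^2$ depends only on row $i-1$ of the iid array $(Z_{it})$. Consequently \eqref{known pp conv} reads $N_p:=\sum_{i=1}^{p+1}\eps_{a_{np}^{-2}D_i}\ston{D}N:=\sum_{i=1}^\infty\eps_{\Gamma_i^{-2/\alpha}\sum_j c_j^2}$. Since large values of an iid heavy-tailed sequence do not cluster at consecutive indices, each "large" index $k$ should contribute exactly two points to $I_p$: the pair $(D_k,D_{k-1})$ looks like $(D_k,0)$ on the $a_{np}^2$ scale, while $(D_{k+1},D_k)$ looks like $(0,D_k)$, yielding precisely the Dirac masses on the two coordinate axes that define $I$.

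\textbf{Step 1 (no double extremes).} For any $\eta>0$, independence and a union bound give
\begin{align*}
P\bigl(\exists\,1\leq i\leq p:\,a_{np}^{-2}D_i>\eta\tn{ and }a_{np}^{-2}D_{i+1}>\eta\bigr)\leq p\bigl(P(a_{np}^{-2}D_1>\eta)\bigr)^2,
\end{align*}
which tends to zero because $pP(a_{np}^{-2}D_1>\eta)$ is bounded in view of \eqref{known pp conv}. Let $A_{n,\eta}$ denote the complementary event; on $A_{n,\eta}$, each pair $(D_{i+1},D_i)$ with at least one coordinate exceeding $\eta a_{np}^2$ has the other coordinate below $\eta a_{np}^2$. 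The boundary indices $i=1$ and $i=p$ are absorbed by the same bound.

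\textbf{Step 2 (splitting).} Fix a continuous test function $f\geq 0$ with compact support contained in $[0,\infty)^2\setminus\{0\}$, and choose $\eta>0$ so small that $f$ vanishes on $[0,2\eta]^2$. On $A_{n,\eta}$, regrouping the pairs $(D_{i+1},D_i)$ according to the index $k$ of the large value yields
\begin{align*}
I_p(f)=\sum_{k:\,a_{np}^{-2}D_k>\eta}\bigl[f(a_{np}^{-2}D_k,a_{np}^{-2}D_{k-1})+f(a_{np}^{-2}D_{k+1},a_{np}^{-2}D_k)\bigr]+o_P(1),
\end{align*}
where the "other" coordinate in each term lies in $[0,\eta]$. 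Uniform continuity of $f$ on its support, plus a further $o_P(1)$ error, lets us replace these small coordinates by zero to obtain
\begin{align*}
I_p(f)=\sum_{k:\,a_{np}^{-2}D_k>\eta}\bigl[f(a_{np}^{-2}D_k,0)+f(0,a_{np}^{-2}D_k)\bigr]+o_P(1).
\end{align*}
The right-hand side is a bounded continuous functional of $N_p$ restricted to $(\eta,\infty)$, so by \eqref{known pp conv} and the continuous mapping theorem it converges in distribution to $\sum_{i:\Gamma_i^{-2/\alpha}\sum_j c_j^2>\eta}\bigl[f(\Gamma_i^{-2/\alpha}\sum_j c_j^2,0)+f(0,\Gamma_i^{-2/\alpha}\sum_j c_j^2)\bigr]=I(f)$, the last equality holding because $f$ vanishes near the origin. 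Since this holds for a convergence-determining class of $f$, Kallenberg's theorem identifies the weak limit of $I_p$ as $I$.

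\textbf{Main obstacle.} The delicate point is Step 2: the "other" small coordinate is not exactly zero but only of size $\eta$, and one has to push this approximation through $f$ uniformly over all indices contributing to $I_p(f)$. This is handled by first sending $n\to\infty$ with $\eta$ fixed and then letting $\eta\downarrow 0$, using that $N$ has no mass on the axes to ensure the limit is insensitive to the thresholding. Once these technicalities are in place, the remainder of the argument is routine.
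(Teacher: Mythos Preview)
Your argument is correct and is essentially the paper's proof reorganized: the paper introduces the comparison process $I_p^*=\sum_{i=1}^p\bigl(\eps_{a_{np}^{-2}(D_{i+1},0)}+\eps_{a_{np}^{-2}(0,D_i)}\bigr)$, notes $I_p^*\ston{D}I$ by continuous mapping from \eqref{known pp conv}, and then shows $|I_p(f)-I_p^*(f)|\to 0$ in probability using exactly your two ingredients (no consecutive large values by independence, and uniform continuity of $f$ to replace the small coordinate by zero). Your thresholding-and-regrouping is the same decomposition without naming $I_p^*$ explicitly; the paper's packaging just spares you from tracking how the $o_P(1)$ errors depend on $\eta$.
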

\begin{proof}
By the continuous mapping theorem applied to \eqref{known pp conv}, we immediately conclude that
\begin{align*}
 I_p^* = \sum_{i=1}^{p}\left(\eps_{a_{np}^{-2}(D_{i+1},0)} + \eps_{a_{np}^{-2}(0,D_i)} \right)\ston{D} I.
\end{align*}
Thus, we only have to show that $|I_p(f)-I_p^*(f)|\to 0$ in probability for any continuous function with $\mathrm{supp}(f)\subset\{x=(x_1,x_2)\in\R^2:\max\{|x_1|,|x_2|\}\geq\delta\}$. To this end, let $L=\{x: \min\{|x_1|,|x_2|\}<\delta\}$ and observe that, by independence of $(D_i)$,
\begin{align*}
 EI_p(L^c)\leq pP(|D_{i+1}|\geq a_{np}^2\delta,|D_{i}|\geq a_{np}^2\delta)= O(\delta^{-\alpha}p^{-1})\to 0.
\end{align*}
Thus $I_p(f)=\int_L f dI_p+ o_P(1)$ and, by definition of $I_p^*$, $I_p^*(f)=\int_L f dI_p^*$. Since $f(z)=0$ if $\max\{|x_1|,|x_2|\}<\delta$, it suffices to show that
\begin{align*}
A + B =& \sum_{i=1}^{p} \left| f(a_{np}^{-2}(D_{i+1},D_i)) \1_{\{a_{np}^{-2}|D_{i+1}|\geq\delta\}\cap\{a_{np}^{-2}|D_{i}|<\delta\}} - f(a_{np}^{-2}(D_{i+1},0)) \1_{\{a_{np}^{-2}|D_{i+1}|\geq\delta\}} \right| \\
&+ \sum_{i=1}^{p} \left| f(a_{np}^{-2}(D_{i+1},D_i)) \1_{\{a_{np}^{-2}|D_{i+1}|<\delta\}\cap\{a_{np}^{-2}|D_{i}|\geq\delta\}} - f(a_{np}^{-2}(0,D_i)) \1_{\{a_{np}^{-2}|D_{i}|\geq\delta\}} \right| \ston{P} 0.
\end{align*}
We only treat term $A$, as $B$ can be handled essentially the same way. To this end, observe that
\begin{align*}
 A\leq&\sum_{i=1}^{p} \left| f(a_{np}^{-2}(D_{i+1},D_i)) - f(a_{np}^{-2}(D_{i+1},0)) \right| \1_{\{a_{np}^{-2}|D_{i+1}|\geq\delta\}\cap\{a_{np}^{-2}|D_{i}|<\delta\}} \\
&+ \sum_{i=1}^{p} |f(a_{np}^{-2}(D_{i+1},0))| \1_{\{a_{np}^{-2}|D_{i+1}|\geq\delta\}\cap\{a_{np}^{-2}|D_{i}|\geq\delta\}} = I+II.
\end{align*}
Clearly, by independence,
\begin{align*}
 E(II)\leq \sup f(x) p P(a_{np}^{-2}|D_{i+1}|\geq\delta)P(a_{np}^{-2}|D_{i}|\geq\delta)=O(p^{-1}) \to 0.
\end{align*}
Furthermore, we have, for any $0<\eta<\delta$, that 
\[  \1_{\{a_{np}^{-2}|D_{i+1}|\geq\delta\}\cap\{a_{np}^{-2}|D_{i}|<\delta\}} \leq  \1_{\{a_{np}^{-2}|D_{i+1}|\geq\delta\}\cap\{a_{np}^{-2}|D_{i}|<\eta\}} +  \1_{\{a_{np}^{-2}|D_{i+1}|\geq\eta\}\cap\{a_{np}^{-2}|D_{i}|\geq\eta\}}. \]
Thus, for some $c>0$,
\begin{align*}
 E(I)\leq& \sup\{|f(x_1,x_2)-f(x_1,0)|: |x_1|>\delta,|x_2|<\eta\}pP(|D_{i+1}|\geq a_{np}^2\eta) \\ &+ cp P(|D_{i+1}|\geq a_{np}^2\eta)P(|D_{i}|\geq a_{np}^2\eta).
\end{align*}
Obviously, the second summand converges, for fixed $\eta>0$, to zero as $n\to\infty$. The first summand can be made arbitrarily small by choosing $\eta$ small enough, since $f$ is uniformly continuous.
\end{proof}
The continuous mapping theorem applied to \cref{ma of partial sum} gives
\[ \sum_{i=1}^p\eps_{a_{np}^{-2}(\theta^2 D_{(i+1)}+D_i)} \ston{D} \sum_{i=1}^\infty \left(\eps_{\Gamma_i^{-2/\alpha}\sum_j c_j^2\theta^2}+\eps_{\Gamma_i^{-2/\alpha}\sum_j c_j^2}\right). \]
Therefore, by \eqref{lower bound ma1}, the asymptotic lower bound of $\lambda_{\max}$ is given by
\begin{align}\label{ma1 lower}
a_{np}^{-2} \max_{1\leq i\leq p} (D_{i}+\theta^2 D_{i+1}) \ston{D} \max\{1,\theta^2\} \Gamma_1^{-2/\alpha} \sum_j c_j^2.
\end{align}
Regarding the upper bound, we make use of the fact that $\norm{HDH^\T}_2\leq\norm{HDH^\T}_{\infty}$. Observe that
\begin{align*}
\norm{HDH^\T}_{\infty} =&  \max_{1\leq i\leq p} \left( \1_{\{i\neq 1\}} |\theta| D_i +  D_{i}+\theta^2 D_{i+1} + |\theta| D_{i+1} \1_{\{i\neq p\}}  \right) \\
=& \max_{1\leq i\leq p} \left( (1+|\theta|\1_{\{i\neq 1\}} ) D_i + (|\theta|\1_{\{i\neq p\}} +\theta^2) D_{i+1} \right). 
\end{align*}
So once again we have to determine the maximum of an MA(1) of partial sums of linear processes. An application of \cref{ma of partial sum} yields that
\begin{align}\label{ma1 upper}
a_{np}^{-2} \norm{HDH^\T}_{\infty} \ston{D} \max\{1+|\theta|,|\theta|+\theta^2\} \Gamma_1^{-2/\alpha} \sum_j c_j^2.
\end{align}
The lower and upper bound \eqref{ma1 lower} and \eqref{ma1 upper} together with equation \eqref{opnorm2} finally yield that
\begin{align*} 
 P\left( \max\{1,\theta^2\} \Gamma_1^{-2/\alpha} \sum_j c_j^2 >x \right) \leq& \liminf_{n\to\infty} P\left( \lambda_{\max} > a_{np}^2 x \right) \\
\leq& \limsup_{n\to\infty} P\left( \lambda_{\max} > a_{np}^2 x \right) \\ \leq&  P\left( \left(|\theta|+\max\{1,\theta^2\}\right) \Gamma_1^{-2/\alpha} \sum_j c_j^2>x \right).
\end{align*}
Clearly, this result is a special case of \cref{theorem dependent rows} when the process $\xi_{it}$ is a moving average process of order one.

\section{Proof of the theorem}

In this section we will proof \cref{theorem dependent rows} in its full generality. We start with the case where $\alpha<2$. To this end we define an approximation $\hat X^{(p)}$ of $X$ and 
so that 
\begin{align}
\tn{(i)}&\quad a_{np}^{-2}\norm{\hat X^{(p)} (\hat X^{(p)})^\T - HDH^\T}_2\ston{P}0, \label{opnormconv1} \\
\tn{(ii)}&\quad a_{np}^{-2}\norm{\hat X \hat X^\T - \hat X^{(p)} (\hat X^{(p)})^\T}_2\ston{P}0, \label{opnormconv2} \\
\tn{(iii)}&\quad \tn{and finally we derive upper and lower bounds for } \norm{HDH^\T}_2. \nonumber
\end{align}
Note that, for notational convenience, we will assume that $\theta_k=0$ for $k<0$, since the extension of the proof to the case where the dependence in \eqref{m2} is two-sided is analogous.\\

\noindent (i).
First we define the approximation $\hat X^{(p)} = (\hat X_{it}^{(p)})\in\R^{p\times n}$ by $\hat X_{it}^{(p)}=\sum_{k=0}^p \theta_k X_{i-k,t}$, where $X_{it}=\sum_j c_j Z_{i,t-j}$. Furthermore we define $X = (X_{i-p,t})\in\R^{2p\times n}$, and $H=(H_{ij})\in\R^{p\times 2p}$ by
\begin{align}
 H_{ij} = \left\{\begin{array}{cl} \theta_{p-(j-i)} & \tn{ if } 0\leq j-i\leq p, \\
				    0		    & \tn{ else}.
                 \end{array}\right.
\end{align}
Then we have that $HX=\hat X^{(p)}$. Indeed,
\begin{align*}
 (HX)_{it} =& \sum_{l=0}^{2p} H_{il} X_{l-p,t} = \sum_{l=i}^{i+p} H_{il} X_{l-p,t} = \sum_{l=0}^p H_{i,i+l} X_{i+l-p,t} = \sum_{l=0}^p \theta_{p-l} X_{i-(p-l),t} 
\\ =& \sum_{k=0}^p \theta_k X_{i-k,t} = \hat X_{it}^{(p)}.
\end{align*}
Thus, if we let $D=(D_{i})=\diag(XX^\T)\in\R^{2p\times 2p}$, then we obtain \eqref{opnormconv1} by virtue of \cref{rephrased op norm conv} and $\norm{H}_2\leq\norm{H}_\infty\leq\sum_{k=0}^\infty|\theta_k|<\infty$ .\\

\noindent (ii).
In order to proceed we will require the following lemma.
\begin{lemma}\label{ma of partial sum 2}
Under the conditions of \cref{theorem dependent rows} we have, for $0<\alpha<2$, that
 \begin{align*}
\sum_{i=1}^p \eps_{a_{np}^{-2}\sum_{k=0}^\infty \theta_{k}\sum_{t=1}^n X_{i-k,t}^2} \ston{D} \sum_{i=1}^\infty \sum_{k=0}^\infty \eps_{\Gamma_i^{-2/\alpha} \theta_k \sum_j c_j^2 }.
\end{align*}
\end{lemma}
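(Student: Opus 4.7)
My plan is to mirror the strategy of \cref{ma of partial sum}: truncate the filter $(\theta_k)$ at level $K$, prove the point-process convergence for the truncated sum by generalizing the coincidence argument of that lemma from windows of length $2$ to windows of length $K+1$, and then let $K\to\infty$ after controlling the tail of $(\theta_k)$.

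\emph{Step 1 (truncated limit).} Write $D_j=\sum_{t=1}^n X_{j,t}^2$ and $T_i^{(K)}=\sum_{k=0}^{K}\theta_k D_{i-k}$. The first goal is
\[
\sum_{i=1}^p\eps_{a_{np}^{-2}T_i^{(K)}}\ston{D}\sum_{i=1}^\infty\sum_{k=0}^{K}\eps_{\Gamma_i^{-2/\alpha}\theta_k\sum_j c_j^2}.
\]
By the continuous mapping theorem applied to \eqref{known pp conv}, the companion point process $J_p^{(K)}=\sum_{i=1}^p\sum_{k=0}^K\eps_{a_{np}^{-2}\theta_k D_{i-k}}$ already has the desired right-hand side as its limit. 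I would then show $|J_p^{(K)}(f)-\sum_i f(a_{np}^{-2}T_i^{(K)})|\convp 0$ for any continuous $f$ with $\supp f\subset\{|x|\geq\delta\}$. Exactly as in \cref{ma of partial sum}, independence and the tail bound $P(D_1>a_{np}^2\delta)=O(p^{-1})$ give that the expected number of indices $i$ for which two of $D_{i-K},\dots,D_i$ exceed $a_{np}^2\delta$ simultaneously is $O(\binom{K+1}{2}p^{-1})\to 0$; on the complement at most one term $D_{i-k^\ast}$ is large, and $T_i^{(K)}=\theta_{k^\ast}D_{i-k^\ast}+O\bigl(\delta\sum_k|\theta_k|\bigr)\cdot a_{np}^2$, an error that vanishes via uniform continuity of $f$ as $\delta\downarrow 0$. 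This parallels the $A,B$ / $I,II$ decomposition of the earlier lemma.

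\emph{Step 2 (removing the truncation).} The hypothesis $\sum_k|\theta_k|^\delta<\infty$ with $\delta<\min\{\alpha,1\}\leq 1$ in fact forces $\sum_k|\theta_k|<\infty$: since $|\theta_k|^\delta\to 0$, eventually $|\theta_k|\leq 1$ and then $|\theta_k|\leq|\theta_k|^\delta$. Therefore
\[
a_{np}^{-2}\max_{1\leq i\leq p}\Bigl|\sum_{k>K}\theta_k D_{i-k}\Bigr|\leq\Bigl(\sum_{k>K}|\theta_k|\Bigr)\cdot a_{np}^{-2}\max_{j}D_j,
\]
and since $a_{np}^{-2}\max_j D_j$ is tight by \eqref{known pp conv}, the right-hand side is $o_P(1)$ as $K\to\infty$, uniformly in $n$. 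A standard diagonal argument for vague convergence of point processes on $\R\setminus\{0\}$ (cf.\ Resnick \cite{Resnick2008}) then lets me pass first $n\to\infty$ and then $K\to\infty$, and the truncated limit $\sum_{i=1}^\infty\sum_{k=0}^K\eps_{\Gamma_i^{-2/\alpha}\theta_k\sum_j c_j^2}$ manifestly tends to the asserted full limit.

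\emph{Main obstacle.} The central technical point is the bookkeeping in Step 1: each "large" $D_{j_0}$ must contribute $K+1$ points in the limit, one for every $k\in\{0,\dots,K\}$, corresponding to the $K+1$ windows $i=j_0,\dots,j_0+K$ in which $D_{j_0}$ participates with weight $\theta_k$. The coincidence estimate rules out two big $D$'s sharing a window, making this correspondence clean; the remainder of the argument is a direct lift of \cref{ma of partial sum} from length $2$ to length $K+1$.
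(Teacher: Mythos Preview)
Your Step~1 is essentially the paper's argument: a direct generalization of \cref{ma of partial sum} from windows of length $2$ to length $K+1$, followed by the continuous mapping theorem. (You are a bit cavalier about letting the threshold $\delta$ tend to zero --- $\delta$ is fixed by the support of $f$; as in the proof of \cref{ma of partial sum} one should introduce an auxiliary $0<\eta<\delta$ and send $\eta\downarrow 0$ --- but this is easily repaired.)

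Step~2, however, has a genuine gap. Your factorization
\[
a_{np}^{-2}\max_{1\leq i\leq p}\Bigl|\sum_{k>K}\theta_k D_{i-k}\Bigr|\leq\Bigl(\sum_{k>K}|\theta_k|\Bigr)\cdot a_{np}^{-2}\max_{j}D_j
\]
forces the maximum on the right to range over all indices $j=i-k$ with $1\leq i\leq p$ and $k>K$, i.e.\ over the infinite set $\{\dots,-1,0,1,\dots,p-K-1\}$. The $D_j$'s are iid, nonnegative and nondegenerate, so this supremum is $+\infty$ almost surely; \eqref{known pp conv} only yields tightness of $a_{np}^{-2}\max_{0\leq j\leq p}D_j$ and is of no use here.

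The paper avoids this by not factoring out a max at all. Instead it takes the union bound
\[
P\Bigl(\max_{1\leq i\leq p}\sum_{k>K}|\theta_k|D_{i-k}>a_{np}^2\gamma\Bigr)\leq p\,P\Bigl(\sum_{k>K}|\theta_k|\sum_{t=1}^n X_{1-k,t}^2>a_{np}^2\gamma\Bigr),
\]
expands $X_{1-k,t}^2$, separates the diagonal part $\sum_j c_j^2\sum_t Z_{\cdot,t-j}^2$ from the cross terms, and applies the regular-variation estimate of \cite[Lemma~3.1]{Davis2011} to each piece. This shows that the right-hand side tends, as $n\to\infty$, to a constant times $\bigl(\sum_{k>K}|\theta_k|\bigr)^{\alpha/2}\gamma^{-\alpha/2}$, which vanishes as $K\to\infty$. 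The crucial point is that the infinite tail of $(\theta_k)$ is absorbed into a single regularly varying sum \emph{before} one passes to the maximum over $i$; pulling the max inside, as you do, destroys exactly the structure needed for this to work.
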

\begin{proof}
%
A straight-forward generalization of \cref{ma of partial sum} yields, for any $m<\infty$, that
\begin{align}
  \sum_{i=1}^{p}\eps_{a_{np}^{-2}\sum_{t=1}^n(X_{it}^2,X_{i-1,t}^2,\ldots,X_{i-m,t}^2)} \ston{D} \sum_{k=0}^{m} \sum_{i=1}^\infty \eps_{\Gamma_i^{-2/\alpha}\sum_j c_j^2 e_{k+1}},
\end{align}
where $e_k$ denotes the $k$-th unit vector of $\R^\infty$, i.e, the $k$-th component of $e_k$ is one and all others are zero.
By an application of the continuous mapping theorem we obtain the claim for a finite order moving average of the partial sums $(\sum_{t=1}^n X_{it}^2)_i$, i.e.,
 \begin{align*}
\sum_{i=1}^p \eps_{a_{np}^{-2}\sum_{k=0}^m \theta_{k} \sum_{t=1}^n X_{i-k,t}^2} \ston{D} \sum_{i=1}^\infty \sum_{k=0}^{m} \eps_{\Gamma_i^{-2/\alpha} \theta_{k} \sum_j c_j^2},
\end{align*}
On the other hand we have, for $m\to\infty$, that
\begin{align*}
\sum_{i=1}^\infty \sum_{k=0}^{m} \eps_{\Gamma_i^{-2/\alpha} \theta_{k} \sum_j c_j^2} \sto{D}{m\to\infty} \sum_{i=1}^\infty \sum_{k=0}^{\infty} \eps_{\Gamma_i^{-2/\alpha} \theta_{k} \sum_j c_j^2}.
\end{align*}
To finish the proof of the lemma it is, by \cite[Theorem 3.2]{Billingsley1999}, therefore only left so show that
\begin{align*}
 \lim_{m\to\infty}\limsup_{n\to\infty} \rho\left( \sum_{i=1}^p \eps_{a_{np}^{-2}\sum_{k=0}^m \theta_{k} \sum_{t=1}^n X_{i-k,t}^2}, \sum_{i=1}^p \eps_{a_{np}^{-2}\sum_{k=0}^\infty \theta_{k}\sum_{t=1}^n X_{i-k,t}^2}  \right) = 0,
\end{align*}
where $\rho$ denotes a metric of the vague topology on the space of point processes. To this end, observe that
\begin{align*}
 \left|\sum_{k=0}^m \theta_{k}\sum_{t=1}^n X_{i-k,t}^2 - \sum_{k=0}^\infty \theta_{k}\sum_{t=1}^n X_{i-k,t}^2 \right| 
 \leq \sum_{k>m}|\theta_{k}|\sum_{t=1}^n X_{i-k,t}^2.
\end{align*}
Therefore, by the arguments of the proof of \cite[Proposition 3.5]{Davis2011}, we only have to show, for any $\gamma>0$, that 
\[ \lim_{m\to\infty}\limsup_{n\to\infty} P((A_n^\gamma)^c)=0, \]
where
\begin{align*}
 A_n^\gamma = \left\{ \max_{1\leq i\leq p} \sum_{l>m}|\theta_{l}|\sum_{t=1}^n X_{i-l,t}^2 \leq a_{np}^2\gamma \right\}.
\end{align*}
Observe that
\begin{align}
P((A_n^\gamma)^c) \leq& pP\left(\sum_{l>m}|\theta_{l}|\sum_{t=1}^n X_{lt}^2 > a_{np}^2\gamma\right) \leq pP\left(\sum_{l>m}|\theta_{l}| \sum_j c_j^2 \sum_{t=1}^n Z_{l,t-j}^2 > a_{np}^2 \frac{\gamma}{2}\right) \nonumber\\
&+ pP\left(\sum_{l>m}|\theta_{l}| \sum_j \sum_{k>j} |c_j c_k| \sum_{t=1}^n |Z_{l,t-j}Z_{l,t-k}| > a_{np}^2 {\gamma}\right) = \mrm{I} + \mrm{II}. \label{twoterms}
\end{align}
We have
\begin{align*}
 \lim_{m\to\infty}\limsup_{n\to\infty} I = \lim_{m\to\infty} \left( \sum_{l>m}|\theta_{l}| \right)^{\alpha/2} \left ( 2 \sum_j c_j^2 \right)^{\alpha/2} \gamma^{-\alpha/2}=0
\end{align*}
by a slight modification of the proof of \cite[Lemma 3.1]{Davis2011}. In fact, one can also map the array $(Z_{it})$ to a sequence and then apply \cite[Lemma 3.1]{Davis2011} directly. Regarding the second term, note that
\begin{align*}
 II \leq& pP\left(\sum_{l>m}|\theta_{l}| \sum_j \sum_{k>j} |c_j c_k| \sum_{t=1}^n Z_{l,t-j}^2 > a_{np}^2 {\gamma}\right) \\
+& pP\left(\sum_{l>m}|\theta_{l}| \sum_j \sum_{k>j} |c_j c_k| \sum_{t=1}^n Z_{l,t-k}^2 > a_{np}^2 {\gamma}\right) = \mrm{II}_1 + \mrm{II}_2.
\end{align*}
As before we conclude that
\begin{align*}
  \lim_{m\to\infty}\limsup_{n\to\infty} \mrm{II}_1 = \lim_{m\to\infty} \left( \sum_{l>m}|\theta_{l}| \right)^{\alpha/2} \left ( \sum_j \sum_{k>j} |c_j c_k| \right)^{\alpha/2} \gamma^{-\alpha/2}=0,
\end{align*}
and clearly term $\mrm{II}_2$ can be handled similarly.
\end{proof}
We will now prove equation \eqref{opnormconv2}. By definition of the matrices $\hat X$ and $\hat X^{(p)}$ we have that
\[ (\hat X \hat X^\T - \hat X^{(p)} (\hat X^{(p)})^\T)_{ij}= \sum_{l,l'k,k'\in\Z^2\times(\N_0\backslash\{0,1,\ldots,p\})^2} c_l c_{l'} \theta_k \theta_{k'} \sum_{t=1}^n Z_{i-k,t-l} Z_{j-k',t-l'}. \]
Therefore we have the bound
\begin{align*}
\norm{\hat X \hat X^\T - \hat X^{(p)} (\hat X^{(p)})^\T}_2 \leq& \norm{\hat X \hat X^\T - \hat X^{(p)} (\hat X^{(p)})^\T}_\infty \\ =&
 \max_{1\leq i\leq p} \sum_{j=1}^p \sum_{l,l',k,k'\in\Z^2\times(\N_0\backslash\{0,1,\ldots,p\})^2} |c_l c_{l'} \theta_k \theta_{k'}| \sum_{t=1}^n |Z_{i-k,t-l} Z_{j-k',t-l'}|.
 \end{align*}
Observe that the product $|Z_{i-k,t-l} Z_{j-k',t-l'}|$ has tail index $\alpha/2$ if and only if $j-k'=i-k$ and $l=l'$. In this case we can treat this term like the first term in $\mrm{I}$ in \eqref{twoterms} and obtain
\[ a_{np}^{-2} \max_{1\leq i\leq p} \sum_{l,k,k'\in\Z\times\{p+1,p+2,\ldots\}^2} |c_l^2 \theta_k\theta_{k'}| \sum_{t=1}^n |Z_{i-k,t-l}^2| \ston{P} 0, \]
since $\sum_{k>p}|\theta_k|\to 0$. If the product $|Z_{i-k,t-l} Z_{j-k',t-l'}|$ does not have tail index $\alpha/2$, i.e.,  $j-k'\neq i-k'$ or  $l\neq l'$, then the product has only tail index $\alpha$ and can then be treated similarly as the second term $\mrm{II}$ in \eqref{twoterms}.\\

\noindent (iii).
By a combination of (i) and (ii) we have that
\[ a_{np}^{-2}\norm{\hat X \hat X^\T - HDH^\T}_2\ston{P}0. \]
Thus, by Weyl's inequality, the difference of the largest eigenvalues of $\hat X \hat X^\T$ and $HDH^\T$ converges to zero. As in the previous section, the final step is to find lower and upper bounds on $\norm{HDH^\T}_2$. By definition of $H$, we have
\begin{align*}
 (HDH^\T)_{ij}=\sum_{l=\max\{i,j\}}^{\min\{i,j\}+p}{\theta_{p-(l-i)}\theta_{p-(l-j)}D_l}.
\end{align*}
Hence $HDH^\T$ is no longer a tridiagonal matrix. Recall that the entries of the diagonal matrix $D$ are given by $D_{i}=\sum_{t=1}^n X_{i-p,t}^2$. By virtue of \cref{ma of partial sum 2} an asymptotic lower bound is given by
\begin{align*}
 a_{np}^{-2} \norm{HDH^\T}_2\geq& a_{np}^{-2} \max_{1\leq i\leq p} (HDH^\T)_{ii} \\
=& a_{np}^{-2} \max_{1\leq i\leq p} (\theta_p^2 D_i +\ldots +\theta_0^2 D_{i+p}^2) \ston{D} \Gamma_1^{-2/\alpha} \max_{k}\theta_k^2 \sum c_j^2.
\end{align*}
Regarding the upper bound, observe that
\begin{align*}
  \norm{HDH^\T}_2\leq& \norm{HDH^\T}_\infty=\max_{1\leq i\leq p}\sum_{j=1}^p|(HDH^\T)_{ij}| \\
\leq& \max_{1\leq i\leq p}\sum_{j=1}^p \sum_{l=\max\{i,j\}}^{l=\min\{i,j\}+p}{|\theta_{p-(l-i)}\theta_{p-(l-j)}|D_l} \\
=& \max_{1\leq i\leq p}\sum_{l=1}^{2p} D_l \sum_{j=1}^p \1_{\{ l-p\leq j\leq l, i\leq l\leq i+p \}} |\theta_{p-(l-i)}\theta_{p-(l-j)}| \\
=& \max_{1\leq i\leq p}\sum_{l=i}^{i+p} D_l |\theta_{p-(l-i)}| \sum_{j=l-p}^l |\theta_{p-(l-j)}| \\
=& \max_{1\leq i\leq p}\sum_{l=0}^{p} D_{i+l} |\theta_{p-l}| \sum_{k=0}^p |\theta_{k}|,
\end{align*}
so we have to determine the maximum of a moving average of order $p$ of $(D_i)$, with coefficients $|\theta_{p-l}| \sum_{k=0}^p |\theta_{k}|$. By \cref{ma of partial sum 2},
\begin{align}
 a_{np}^{-2} \max_{1\leq i\leq p}\sum_{l=0}^{p} D_{i+l} |\theta_{p-l}| \sum_{k=0}^p |\theta_{k}| \ston{D} \Gamma_1^{-2/\alpha} \max_{0\leq l\leq \infty} |\theta_{l}| \sum_{k=0}^\infty |\theta_{k}| \sum_j c_j^2.
\end{align}
This completes the proof of \cref{theorem dependent rows} for $\alpha<2$.

\begin{proof}[Proof of \cref{theorem dependent rows} for $\alpha\geq 2$]
Since we now consider the spectral norm 
of $\hat X \hat X^\T-n\mu_{X,\alpha} HH^\T$, one has to replace $D$ by the centered diagonal matrix $\tilde D=D-n\mu_{X,\alpha} I_p$, i.e, 
\[ \tilde D_i=\sum_{t=1}^n(X_{i-p,t}^2-\mu_{X,\alpha}).\]
Then one has, with the same truncation as before, that
\begin{align*}
a_{np}^{-2} \norm{(\hat X^{(p)} (\hat X^{(p)})^\T-n\mu_{X,\alpha} HH^\T)-H\tilde D H^\T}_2=& a_{np}^{-2} \norm{H(X X^\T-n\mu_{X,\alpha} I_p)H^\T-H (D-n\mu_{X,\alpha} I_p) H^\T}_2 \\
\leq& 
\norm{H}_2^2 a_{np}^{-2} \norm{X X^\T - D }_2 
\ston{P} 0,
\end{align*}
by an application of \cref{rephrased op norm conv}. Then one shows, similarly as in \cref{ma of partial sum}, that for each $m<\infty$,
\begin{align*}
\sum_{i=1}^p \eps_{a_{np}^{-2}|\sum_{k=0}^m \theta_{k}\sum_{t=1}^n (X_{i-k,t}^2-\mu_{X,\alpha})|} \ston{D} \sum_{i=1}^\infty \sum_{k=0}^m \eps_{\Gamma_i^{-2/\alpha} \theta_k \sum_j c_j^2 }.
\end{align*}
The extension to the case where $m=\infty$ follows analogously to the proof of \cite[Proposition 3.5 (case $2\leq\alpha<4$)]{Davis2011}. This establishes \cref{ma of partial sum 2} for $2\leq\alpha<4$, i.e.
\begin{align}\label{last eq}
\sum_{i=1}^p \eps_{a_{np}^{-2}|\sum_{k=0}^\infty \theta_{k}\sum_{t=1}^n (X_{i-k,t}^2-\mu_{X,\alpha})|} \ston{D} \sum_{i=1}^\infty \sum_{k=0}^\infty \eps_{\Gamma_i^{-2/\alpha} \theta_k \sum_j c_j^2 }.
\end{align}
Then one shows (i)-(iii) with $D$ replaced by $\tilde D$ by a straightforward combination of \eqref{last eq} and the approach used in the proof of \cref{theorem dependent rows} for $0<\alpha<2$.
\end{proof}

\section*{Acknowledgements}
The author thanks Richard Davis and Robert Stelzer for fruitful discussions on this topic. Their suggestions and comments improved this article considerably. The author further acknowledges the financial support of the Technische Universität München - Institute for Advanced Study, funded by the German Excellence Initiative, and the International Graduate School of Science and Engineering.


\end{document}